\newcommand{\midarrow}{\draw[postaction={decorate}]}
 \newtheorem{theorem}{Theorem}[section]
    \newtheorem{corollary}[theorem]{Corollary}
   \newtheorem{lemma}[theorem]{Lemma}
    \newtheorem{proposition}[theorem]{Proposition}  
    \theoremstyle{definition}
\newtheorem{definition}[theorem]{Definition}
\newtheorem{remark}[theorem]{Remark}
 \newcommand{\Z}{\ensuremath{{\mathbb{Z}}}}
    \newcommand{\abar}{\overline a}
       \newcommand{\mbar}{\overline m}      
\newcommand{\sslash}{/\mkern-6mu/}
\title{Contractibility of Outer space: reprise}
\author{Karen Vogtmann}
\begin{document}
%\linenumbers
\begin{abstract} This note contains a newly streamlined version of the original proof that Outer space is contractible.  
\end{abstract}
\maketitle

%\subsection{}
\section{Introduction}

 In a series of lectures in August 2014 at the Seventh Seasonal Institute of the Mathematical Society of Japan  I began by recalling the construction of Outer space for a free group  $F_n$. This is a finite-dimensional contractible space with a proper action of the group $Out(F_n)$ of outer automorphisms of $F_n$ \cite{CV}.   I  then discussed the idea of developing an analogous outer space for the outer automorphism group of a general right-angled Artin group (RAAG) $A_\Gamma$.  Such a space was introduced in \cite{CSV} for the case that $A_\Gamma$ has no twist automorphisms.  We also understand  the case that $A_\Gamma$ is generated entirely by twist automorphisms and signed permutations, where the relevant space is a contractible subspace of the symmetric space for $SL(n,\mathbb R)$.  Our candidate for an Outer space for a general RAAG  is a hybrid of these two spaces.

The key result of \cite{CSV} is that the outer space constructed there is contractible. Although there are now several proofs that the original Outer space is contractible, the combinatorial techniques used in the original proof \cite{CV} are ultimately what  worked for us in  the more general RAAG setting. Thus the proof in \cite{CSV}   follows the original proof but  also incorporates simplifications, including some which were already introduced in    \cite{KV}.  Unfortunately, new complications  also arise due to the fact that the outer automorphism group of a general RAAG is  more complicated than the outer automorphism group of a free group.  
In this  note I will avoid these complications by just giving the complete simplified argument in the case of a free group. One motivation for doing this is to clarify the original proof, another is to  
make it easier for those who want to understand the general RAAG case to follow the argument.

\section{Outer space and its spine}  In this section we very briefly recall the definition of Outer space for a free group $F_n$ and its spine $K_n$.  For a more detailed introduction to these spaces, see \cite{V}.  

A {\em rose} is a graph with one vertex and $n$ edges (so the edges are loops at the vertex). We begin by fixing a specific rose $R_n$ whose fundamental group we identify with $F_n$.  
A point in Outer space is then an equivalence class  of  {\em marked metric graphs} $(G,g)$ (see Figure~\ref{mgraph}),  i.e.
\begin{enumerate}
\item $G$ is a finite metric graph with all vertices of valence at least $3$.
\item The volume of $G$  (i.e. the sum of the lengths of its edges) is $1$.
\item $g\colon R_n\to G$ is a homotopy equivalence.
\item $(G,g)$ is equivalent to $(G',g')$ if there is an isometry $h\colon G\to G'$ with $h\circ g\simeq g'$.
\end{enumerate}

\begin{figure}
\begin{center}
\begin{tikzpicture} %%%% Marked graph
%%%%%%  Rose
\draw [thick, red, ->] (0,0) to [out=135, in=180]  (0,1);
\draw [thick, red] (0,1) to [out=0, in=60] (0,0);
\draw [thick, red, ->] (0,0) to [out=0, in=45]  (.65,-.65);
\draw [thick, red] (.65,-.65) to [out=225, in=-90] (0,0);
\draw [thick, red, ->] (0,0) to [out=270, in=-45]  (-.65,-.65);
\draw [thick, red] (-.65,-.65) to [out=-225, in=180] (0,0);
\node   (c) at (0,0) {$\bullet$};
\node (R) at (0,-1.2) {$R_3$};
\draw [thick, ->] (1.2,0) to (2.2,0);
\node (mu) at (1.7,.2) {$g$};
%%%%%%%%%%% Theta  
\node (t) at (3.5,.55) {$\bullet$};
\node (b) at (3.5,-.75) {$\bullet$};
\draw [thick]  (3.5, .85) circle (.3);
\draw [thick] (3.5,.55) to   (3.5,-.75);
\draw [thick] (3.5,.55) to  [out=0, in=0]  (3.5,-.75);
\draw [thick] (3.5,.55) to [out=180, in=180]  (3.5,-.75);
 
\node (G) at (3.5,-1.2) {$G$};
\end{tikzpicture}
\caption{A marked graph $(G,g)$}\label{mgraph}
\end{center}
\end{figure}
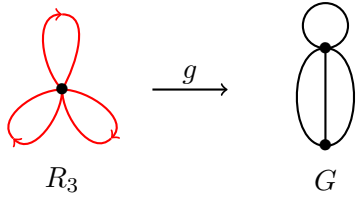
In the rest of this note  we will not be careful about distinguishing between a marked graph and its equivalence class.  
\begin{remark}  Requiring that the volume be equal to one is a means of normalizing  the projective class of a metric graph.  It is also sometimes convenient to consider other normalizations, or even to consider the  unprojectivized version of Outer space, where the edges of $G$ are allowed to have any positive lengths.
\end{remark}

Outer space is a union of open simplices, where the simplex containing $(G,g)$ consists of all marked graphs one can obtain  by varying the (positive) edge-lengths of $G$ while keeping the volume  equal to one.  Passing to a face of the simplex corresponds to shrinking some edges to points.    Some faces of each simplex are missing, since if an entire loop is shrunk to a point   the fundamental group is no longer $F_n$ and the induced marking is no longer a homotopy equivalence.  Formally including these missing faces gives a simplicial complex, called the {\em simplicial closure} of Outer space; the simplices which are not in Outer space are said to be {\em at infinity}.    

\begin{remark} The simplicial closure of Outer space is also called the {\em free splitting complex} or the {\em sphere complex}; these terminologies arise from different (equivalent) descriptions of Outer space, the first as a space of actions of $F_n$ on metric simplicial trees and the second as a space of weighted sphere systems in a doubled handlebody.  
\end{remark}

The  set of all open simplices of Outer space is partially ordered by the face relation, and the geometric realization of this partially ordered set (poset) is called the {\em spine} of Outer space.  Thus a simplex in the spine $K_n$ is a chain of open simplices $\sigma_0\subset \ldots \subset \sigma_k$ with $\sigma_i$ a proper face of $\sigma_{i+1}$.  In other words,  $K_n$  is a subcomplex of the barycentric subdivision of the simplicial closure.   There is a natural equivariant deformation retract of all of Outer space onto $K_n$, performed by pushing linearly from the (missing) simplices at infinity onto the spine.  

{\em Thus to prove that Outer space is contractible it suffices to show that the spine $K_n$ is contractible. } 

\section{Structure of the spine $K_n$ and plan of attack} 

Since vertices of $K_n$ sit at the barycenters of simplices of  Outer space all edges have the same length and we may think of these as purely combinatorial (as opposed to metric) objects or, equivalently,  assume all edges have length one.  We take this point of view for the rest of the paper.  

To describe the simplices of $K_n$, recall that a {\em forest}  in a graph $G$  is a subgraph  which contains no loops, i.e. a forest is a disjoint union of trees. Collapsing each tree of a forest $F$ to a point gives a new graph $G\sslash F$  and the collapsing map   $c_F\colon G\to G\sslash F$ is a homotopy equivalence, so   the composition $c_F\circ g$ is a marking of $G\sslash F$.  Forest collapse gives the vertices of $K_n$ the structure of a partially ordered set (poset).  The entire complex $K_n$ is the {\em geometric realisation}  (also called the {\em order complex}) of this poset.  In other words, there is an edge in $K_n$ from $(G,g)$ to $(G',g')$ whenever $(G',g')$ can be obtained from $(G,g)$ by a forest collapse, and there is a $k$-simplex for every chain of $k$ forest collapses \[(G_0,g_0)\to (G_1,g_1)\to \cdots\to (G_k,g_k).\]   

Every vertex in $K_n$ is connected by an edge to at least one marked rose $(R,r)$, obtained from $(G,g)$ by collapsing a maximal tree.  Thus $K_n$ is the union of the simplicial stars of its marked roses.
In order to prove that $K_n$ is contractible the idea is to build $K_n$ by starting with the star of a single marked rose (which is contractible), then attach the rest of the stars in some order and prove that at each stage we are attaching along something contractible.   In order to carry out this plan we need to
\begin{enumerate}
\item Define a {\em norm} on marked roses and prove that this norm well-orders the marked roses.
\item Identify which marked graphs in the star of a marked rose are {\em reductive}, i.e. are adjacent to marked roses of smaller norm.
\item  Prove that the subcomplex of reductive marked graphs in a star is contractible.  
\end{enumerate}
We actually perform a  bit of sleight-of-hand because it is easy to show that $K_n$ is connected.  We then show that {\em if\,} the subcomplex of reductive marked graphs is non-empty, {\em then} it is contractible.   This shows that $K_n$ is a union of contractible components, but since it is connected it is actually contractible.

\section{Connectivity of $K_n$ via Stallings folds}

If $h\colon R_n\to R$ is a homeomorphism then the marked rose $\rho_0=(R,h)$ is called the {\em standard rose}.  Since every vertex of $K_n$ is connected to a marked rose, to prove that $K_n$ is connected it suffices to connect any  marked rose to $\rho_0$.    The fact that you can do this follows easily from Nielsen's theorem that $Out(F_n)$ is generated by signed permutations and transvections (i.e. automorphisms which multiply one generator by another).  But there is also   a very slick,  completely elementary way to see this, due to Stallings (which also reproves Nielsen's theorem).  

A map $g\colon G\to H$ between two  graphs is called a {\em graph morphism} if it  is   sends vertices to vertices and  edges either to single edges or to vertices.  If we are allowed to add bivalent vertices to $G$ then any continuous map is homotopic to a graph morphism, so we will assume all of our maps are  graph morphisms.    

If a graph morphism  $g\colon G\to H$ is not locally injective then either some edge of $G$ must be mapped to a point in $H$ or two edges of $G$ emanating from the same vertex must map to the same edge in  $H$.  In either case, $g$ factors through a morphism $G\to G_1$, which either collapses an edge (in the first case) or folds two edges together (in the second case).  In the second case this morphism is called a {\em Stallings fold}.  An example is illustrated by the top arrow in Figure~\ref{folds}.    
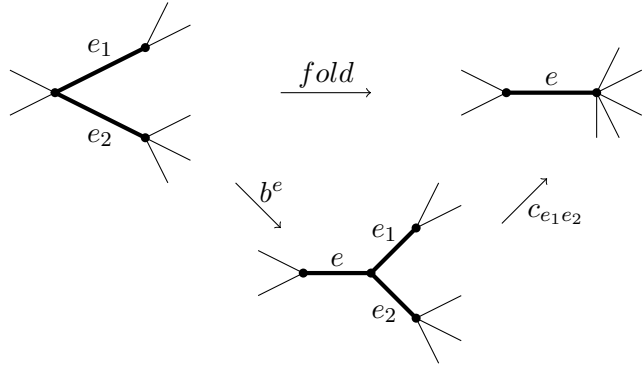
\begin{figure}
\begin{tikzpicture} [scale=.6]%%%%%%Stallings fold
%%%Original graph 
\draw [line width=1.6pt] (2,1) to (0,0) to (2,-1);
\draw (-1,.5) to (0,0) to (-1,-.5);
\draw (2.5,2) to (2,1) to (3,1.5);
\draw (2.5,-2) to (2,-1) to (3,-1.5);
\draw (3,-.5) to (2,-1);
\fill (0,0) circle (.1);
\fill (2,1) circle (.1); 
\fill (2,-1) circle (.1); 
\node (eone) at (1,1) {$e_1$};
\node (etwo) at (1,-1) {$e_2$};
%%% Fold map
\node (phi)   at (6,.4) {$fold$};
\draw [->] (5,0) to (7,0);
%%%Folded graph 
\begin{scope}[xshift=10cm]
\draw (-1,.5) to (0,0) to (-1,-.5);
\draw [line width=1.6pt]  (0,0) to (2,0);
\draw (2.5,1) to (2,0) to (3,.5);
\draw (2.5,-1) to (2,-0) to (3,-.5);
\draw (2,0) to (2,-1);
\fill (0,0) circle (.1);
\fill (2,0) circle (.1); 
\node (e) at (1,.3) {$e$};
\end{scope}
%%%Intermediate graph 
\begin{scope}[xshift=6cm, yshift=-4cm]
\draw [line width=1.6pt] (2,1) to (1,0) to (2,-1);
\draw (-1.5,.5) to (-.5,0) to (-1.5,-.5);
\draw [line width=1.6pt] (-.5,0) to (1,0);
\draw (2.5,2) to (2,1) to (3,1.5);
\draw (2.5,-2) to (2,-1) to (3,-1.5);
\draw (3,-.5) to (2,-1);
\fill (-.5,0) circle (.1);
\fill (2,1) circle (.1); 
\fill (2,-1) circle (.1); 
 \fill (1,0) circle (.1);  
\node (eone) at (1.3,.9) {$e_1$};
\node (etwo) at (1.3,-.9) {$e_2$};
\node (e) at (.25,.3) {$e$};
\draw [->] (3.9,1.1) to (4.9,2.1);
%\node (ce)   at (-1.2,1.8) {$c_e$};
%\draw [->] (-1,1) to (-2,2);
\node (b)   at (-1.2,1.8) {$b^e$};
\draw [->] (-2,2) to (-1,1);
\node (cef)   at (5.1,1.3) {$c_{e_1e_2}$};
\end{scope}
\end{tikzpicture}
\caption{A fold is a blowup followed by a forest collapse}\label{folds}
\end{figure}

\begin{proposition} $K_n$ is connected.
\end{proposition}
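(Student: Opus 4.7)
The plan is to connect any marked rose to the standard rose $\rho_0$ via an explicit path in $K_n$ built from Stallings folds; since every vertex of $K_n$ lies in the star of some marked rose (via a maximal-tree collapse), this suffices to give connectivity of $K_n$.

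Given a marked rose $(R,g)$, first subdivide the petals of $R_n$ so that the marking is realised as a graph morphism $\tilde g\colon \tilde R_n\to R$, sending each subdivided petal to a reduced edge-path in $R$. If $\tilde g$ fails to be locally injective, then two distinct edges at some common vertex map to the same edge of $R$, and a Stallings fold identifies them to yield a new graph morphism with strictly fewer edges. Because $\tilde R_n$ and $R$ have the same rank and $\tilde g$ is $\pi_1$-surjective, every fold that arises is a homotopy equivalence (no rank-reducing folds occur), so finitely many folds reduce $\tilde g$ to a homeomorphism. This produces a sequence of graphs $\tilde R_n = G_0\to G_1\to\cdots\to G_k\cong R$ in which each $G_i$ inherits a marking by composing $g$ with the fold maps.

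As Figure~\ref{folds} records, each fold $G_i\to G_{i+1}$ factors as a blow-up followed by a forest collapse: there is a marked graph $G_i^+$ obtained from $G_i$ by inserting an edge $e$ at the fold vertex, such that collapsing $e$ yields $G_i$ while collapsing the two separated edges yields $G_{i+1}$. Thus $G_i$ and $G_{i+1}$ share $G_i^+$ as a common face in the spine, so they lie at distance at most $2$ in $K_n$. Concatenating these length-$2$ paths over $i$ gives a path in $K_n$ from $G_0$ to $G_k$; and $G_0 = \tilde R_n$ represents the same vertex of $K_n$ as $\rho_0$, since subdivision (which only adds bivalent vertices) does not alter the marked-graph equivalence class.

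The main technical point is bookkeeping of markings: one must verify that each blow-up $G_i^+$ forest-collapses to both $G_i$ and $G_{i+1}$ with compatible markings induced from $g$, and that the bivalent vertices introduced by subdivision can be harmlessly absorbed when identifying intermediate graphs with genuine vertices of $K_n$. Neither issue is a real obstacle, but careful notation is needed to execute the argument cleanly.
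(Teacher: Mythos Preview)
Your proof is correct and follows essentially the same Stallings-fold strategy as the paper. The only difference is direction: the paper subdivides the given rose $R$ and folds a homotopy inverse $s\colon R\to R_n$ down to a homeomorphism, whereas you subdivide $R_n$ and fold the marking $\tilde g\colon \tilde R_n\to R$; this is immaterial, and the factorisation of each fold as a blowup followed by a forest collapse is identical in both accounts. One small wording slip: the marking on each $G_i$ is the composition of the subdivision map $R_n\to\tilde R_n$ with the fold maps $G_0\to\cdots\to G_i$, not a composition with $g$ itself.
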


\begin{proof} As remarked above, it suffices to connect any marked rose $\rho=(R,r)$ to the standard rose $\rho_0$.  We  begin by representing a homotopy inverse to $r$  by a graph morphism $s\colon~R~\to~R_n$  To do this we need to subdivide the edges of $R$ suitably.  Technically this is not allowed in $K_n$ since it introduces bivalent vertices, but we can recover the point of $K_n$ by simply ignoring the bivalent vertices.     
If $s$ is not locally injective, then either some edge collapses or you can fold two edges which start at the same vertex.  Note that these edges must have distinct terminal vertices, since otherwise they would form a loop with null-homotopic image, which can't happen because $s$ is a homotopy equivalence.

Recall that collapsing a forest in a marked graph gives an edge in  $K_n$ (unless the forest contains only proper subsets of subdivided edges, in which case collapsing does not change the point of $K_n$).   The reverse of a single edge collapse is called a {\em blowup}.  A fold corresponds to a  blowup followed by a forest collapse when the folded edges have distinct terminal vertices; thus a Stallings fold gives a path in the $1$-skeleton of $K_n$. See Figure~\ref{folds} for the case that neither edge is a loop. There is a similar picture if one edge is a loop. 
 
If $s$ is not locally injective, perform a fold.  (If a univalent vertex is produced, also collapse the adjacent edge; such a fold and collapse does not change the point of $K_n$.)   
If the induced map is not locally injective then fold or collapse again, thus producing a path in $K_n$.  This process has to stop because each time  the total  number of edges in the graph decreases.  When it stops,  the induced map $f\colon G\to R_n$ is locally injective. 

We now claim that a locally injective map $f$ is actually a homeomorphism, so the path in $K_n$ has arrived at $(f^{-1},G)= \rho_0$.  
To see this, let $x_i$ be the $i$-th petal of $R_n$.  Since $f$ is a homotopy equivalence, there is some loop $\ell_i$ in $G$ with $f(\ell_i)\simeq x_i$.  Since $f$ is locally injective, $\ell_i$ is a simple loop in $G$.  If $i\neq j$, then the loops $\ell_i$ and $\ell_j$ can intersect in at most a point, since that is true of their images $x_i$ and $x_j$, so any overlap would have to collapse to a point.  The union of the $\ell_i$ must be all of $G$, since otherwise the complement would be a forest which must collapse.  Finally  the $\ell_i$ must all intersect in the same point, forming a rose. 
\end{proof}

 \section{The norm of a rose}
 The next task is to find a ``Morse function" which totally orders the roses. The idea is that more complicated markings should come later in the ordering.  
 
For any loop $\gamma$ in a graph $G$, let $\ell_G(\gamma)$ denote the length of the shortest loop in the (free) homotopy class of $\gamma$,  where we think of each edge of $G$ as having length one.  Note that $\gamma$ is shortest  in its homotopy class if and only if it is  locally injective, in which case we call  it   a {\it tight} loop.

Fix a basis $x_1,\ldots,x_n$ for $F_n$  and list the conjugacy classes in order of increasing (cyclically reduced)  word-length:
 \[\mathcal W=(w_1,w_2,\ldots)=(x_1,x_2,\ldots, x_n, x_1^2,\ldots,x_1x_2,\ldots, x_1x_2^{-1},\ldots,x_1^3,\ldots ).\]   Note that it is redundant to include both $w$ and $w^{-1}$ in $
 \mathcal W$, so we won't. Let $\Z^{\mathcal W}$ be the associated ordered abelian group, with the lexicographical order.  For any rose $\rho=(r,R)$  and any $w\in \mathcal W$, define $\|\rho\|_w$ to be equal to $\ell_R(r(w))$.  The norm of  $\rho$ is then defined by \[\|\rho\|=\|\rho\|_{\mathcal W} =  (\|\rho\|_{w_1}, \|\rho\|_{w_2} ,\ldots)\in \Z^{\mathcal W}.\] 
This norm totally orders the roses, according to the following basic theorem, proved independently by  Alperin and Bass and by Culler and Morgan:

\begin{theorem}\label{Chiswell} \cite{AB, CM} A free action of $F_n$ on a simplicial tree is determined by its translation length function.
\end{theorem}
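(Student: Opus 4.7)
The plan is to show that the translation length function
\[
\ell\colon F_n\to \Z_{\geq 0},\qquad \ell(g)=\min_{x\in T} d(x,g\cdot x),
\]
of a free simplicial $F_n$-tree $T$ canonically determines $T$ up to equivariant isometry, so that any two free actions sharing the same $\ell$ are equivariantly isometric. Without loss of generality we may pass to the minimal invariant subtree (the union of all axes), since the complement is collapsed by any equivariant retraction and contributes nothing to $\ell$.

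The first step is to note that under a free action every nontrivial $g\in F_n$ is hyperbolic, translating a unique axis $A_g\subset T$ by exactly $\ell(g)>0$. So the tree is determined once we know, for each pair $(g,h)$, the relative position of the axes $A_g$ and $A_h$ (disjoint, overlapping in an arc, coincident) together with the translation directions. The second step is the calculation that reads this data off of $\ell$: if $A_g,A_h$ are disjoint at distance $d$, then $\ell(gh)=\ell(gh^{-1})=\ell(g)+\ell(h)+2d$; if they share an arc of length $\alpha$, then $\ell(gh)$ and $\ell(gh^{-1})$ differ from $\ell(g)+\ell(h)$ by $\pm 2\alpha$ in a way that records whether the translations agree or disagree on the overlap. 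In either case all the geometric invariants (distances between axes, overlap lengths, and orientations) are functions of $\ell$ evaluated at $g$, $h$, $gh$, and $gh^{-1}$.

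Having recovered this combinatorial-geometric data, I would assemble it following Chiswell into a Gromov-product function
\[
(g\,|\,h) \;=\; \tfrac{1}{2}\bigl(\ell(g)+\ell(h)-\ell(g h^{-1})\bigr),
\]
suitably normalized to be nonnegative, and verify the four-point (zero-hyperbolicity) inequality $(g|h)\geq \min\{(g|k),(h|k)\}$. A standard construction then produces a canonical $\R$-tree together with an isometric $F_n$-action whose length function is the prescribed $\ell$; integrality of $\ell$ forces the tree to be simplicial, and the uniqueness of this construction is exactly the content of the theorem. The main obstacle is the case analysis behind the four-point condition, which requires examining every possible arrangement of three axes in a tree (three pairwise disjoint, two coincident, a common tripod, three sharing a segment, and so on) and checking the identity in each configuration. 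Once this is done, the reconstruction of $T$ from $\ell$ is formal and the theorem follows.
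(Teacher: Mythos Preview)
The paper does not give its own proof of this theorem: it is quoted as a result from the literature, with citations to Alperin--Bass \cite{AB} and Culler--Morgan \cite{CM}, and is used as a black box (most visibly inside the proof of Lemma~\ref{finite} and Proposition~\ref{ordering}). So there is no argument in the paper to compare your proposal against.

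That said, your sketch is broadly in the spirit of the cited references, but it conflates two distinct reconstructions. Chiswell's construction, and the Gromov-product formula you wrote, take as input a \emph{based} length function $L(g)=d(x_0,gx_0)$ satisfying Lyndon's axioms; the quantity $\tfrac12\bigl(L(g)+L(h)-L(gh^{-1})\bigr)$ is then genuinely a Gromov product at $x_0$ and the $0$-hyperbolicity inequality holds. For the \emph{translation} length function $\ell$, that same expression is typically negative (e.g.\ when $A_g$ and $A_h$ are disjoint at distance $d$ it equals $-d$), so ``suitably normalized to be nonnegative'' is hiding real work. The arguments in \cite{AB,CM} proceed differently: they axiomatize hyperbolic length functions directly, and reconstruct the tree from the combinatorics of axes and the identities you mention for $\ell(gh)$ and $\ell(gh^{-1})$, rather than by feeding $\ell$ straight into a Gromov-product machine. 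Your first two steps (hyperbolicity of every nontrivial element, and recovering axis configurations from $\ell$ on $g,h,gh,gh^{-1}$) are exactly right; the third step should either be replaced by the Culler--Morgan/Alperin--Bass reconstruction, or you should first recover a based length function from $\ell$ and only then invoke Chiswell.
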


We will show that our norm has the following stronger property, which we will need to do induction:

\begin{proposition}\label{ordering} The set of  roses is well-ordered in this norm.
\end{proposition}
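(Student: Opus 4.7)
My plan is to exhibit, for any non-empty set $S$ of roses, an actual minimum element. Set $S_0 = S$ and inductively define
\[ m_k = \min\{\|\rho\|_{w_k} : \rho \in S_{k-1}\}, \qquad S_k = \{\rho \in S_{k-1} : \|\rho\|_{w_k} = m_k\}. \]
Each $m_k$ exists because the values $\|\rho\|_{w_k}$ are non-negative integers, and each $S_k$ is non-empty by construction. Any element of $\bigcap_{k \geq 1} S_k$ is then, by definition of the lexicographic order, a minimum of $S$; the task is therefore to show this intersection is non-empty.

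The key ingredient is the following finiteness lemma: the set $\{\rho : \|\rho\|_{x_i} = m_i \text{ for all } i = 1, \ldots, n\}$ is finite. Since the first $n$ entries of $\mathcal{W}$ are exactly the generators $x_1, \ldots, x_n$, this says $S_n$ is finite. I would prove the lemma directly: a marked rose $\rho = (R,r)$ corresponds, modulo the finite stabilizer of signed permutations of petals, to an outer automorphism $[\phi] \in Out(F_n)$ with $\phi(x_i)$ of cyclic word length $m_i$. There are only finitely many conjugacy classes in $F_n$ of cyclic length $m_i$, and — by a Nielsen-style counting argument — only finitely many outer classes whose generator images realize a given $n$-tuple of conjugacy classes; this is essentially the statement that the $Out(F_n)$-action on the spine is proper.

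Given the finiteness of $S_n$, the chain $S_n \supseteq S_{n+1} \supseteq \cdots$ is a descending chain of finite non-empty sets and hence stabilizes: there exists $N \geq n$ with $S_N = S_{N+1} = \cdots$. Every $\rho \in S_N$ then satisfies $\|\rho\|_{w_k} = m_k$ for every $k$, so all elements of $S_N$ share the same norm function, and therefore the same translation length function on $F_n$ acting on their respective universal cover trees. By Theorem~\ref{Chiswell}, these actions coincide, so $|S_N| = 1$. This unique element is the minimum of $S$.

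The principal obstacle is the finiteness lemma. The subtlety is that cyclic word length is a property of conjugacy classes, not of automorphisms themselves, so bounding $|\phi(x_i)|_{cyc}$ leaves conjugation ambiguity in each coordinate. I would handle this by fixing cyclically reduced representatives $u_i$ of $\phi(x_i)$ and analyzing the residual freedom: automorphisms with the same $n$-tuple $(u_1, \ldots, u_n)$ of representatives differ by an inner automorphism (so yield the same outer class), and changing the representatives amounts to choosing conjugators, which can inflate the count only finitely once one accounts for the centralizers of the $u_i$ in $F_n$. Only finitely many such tuples even define an automorphism, and the total count of outer classes is therefore finite.
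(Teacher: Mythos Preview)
Your overall strategy is sound and differs from the paper's in an interesting way. The paper constructs the ``limit'' function $w_i\mapsto \ell_i$, verifies that it satisfies the axioms characterizing translation length functions (this is the existence direction of the Alperin--Bass/Culler--Morgan correspondence, not just the injectivity stated in Theorem~\ref{Chiswell}), deduces that it comes from an actual marked graph, and then invokes Lemma~\ref{finite} to see that the chain stabilizes at that graph. Your approach sidesteps the construction of a limit action entirely: if $S_n$ is already finite, the descending chain of finite non-empty sets must stabilize, and then only the injectivity direction of Theorem~\ref{Chiswell} is needed. That is a legitimate trade-off.

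The gap is exactly where you flag it. Your sketch of the finiteness lemma is not a proof. The assertion ``only finitely many such tuples even define an automorphism'' is the entire content, and neither ``Nielsen-style counting'' nor ``properness of the action'' justifies it as written. Properness of the $Out(F_n)$-action on the spine tells you cell stabilizers are finite and the quotient is finite; it does not by itself tell you that the level sets of $\rho\mapsto (\|\rho\|_{x_1},\ldots,\|\rho\|_{x_n})$ are finite, which is precisely the claim. And your conjugator paragraph does not control the infinitely many choices of $g_2,\ldots,g_n$ once $g_1$ is normalized away; saying ``only finitely many define an automorphism'' just restates the goal.

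Here is a clean way to close the gap, using the paper's own Lemma~\ref{finite}. The proof of that lemma shows that a marked rose is determined by the values $\|\rho\|_w$ for $|w|\leq 2$. Now observe that
\[
\|\rho\|_{x_ix_j^{\pm 1}}\leq \|\rho\|_{x_i}+\|\rho\|_{x_j}=m_i+m_j,\qquad \|\rho\|_{x_i^2}=2m_i,
\]
so fixing $\|\rho\|_{x_i}=m_i$ bounds every length-$\leq 2$ value by $2\max_i m_i$. Since these values are non-negative integers there are only finitely many possible tuples $(\|\rho\|_w)_{|w|\leq 2}$, and by the injectivity just cited, only finitely many roses realize them. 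With this in hand your argument goes through, and in fact uses slightly less external machinery than the paper's version.
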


\begin{lemma}\label{finite} Any free minimal action of $F_n$ on a simplicial tree is determined by the translation lengths of finitely many conjugacy classes.  
\end{lemma}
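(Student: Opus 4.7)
The plan is to exploit the finiteness of the quotient graph $G = T/F_n$, turning the statement into a finite-dimensional linear-algebra problem, and then invoke Theorem~\ref{Chiswell}. Because $T$ is minimal and $F_n$ is finitely generated, $G$ is a finite graph with $\pi_1(G) \cong F_n$; label its edges $e_1,\ldots,e_k$ with positive lengths $\ell_1,\ldots,\ell_k$. For every conjugacy class $w$, the unique tight representative of $w$ in $G$ traverses edge $e_i$ exactly $n_i(w) \in \Z_{\geq 0}$ times, so
\[
\ell_T(w) \;=\; \sum_{i=1}^k n_i(w)\,\ell_i .
\]

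I would take $\mathcal{F}$ to be the set of conjugacy classes whose tight representatives in $G$ cross each edge at most twice; since $G$ is finite, $\mathcal{F}$ is finite, and concretely it contains the embedded loops, figure-eights at each vertex, and barbells of $G$. The first task is to verify that the edge-traversal vectors $\{(n_1(w),\ldots,n_k(w)) : w\in\mathcal{F}\}$ span $\mathbb{R}^k$; this uses minimality (every edge lies on some tight loop) together with a maximal-tree argument to exhibit candidates witnessing every edge. Once spanning is in hand, pick $w_1,\ldots,w_k \in \mathcal{F}$ with linearly independent traversal vectors. The values $\ell_T(w_1),\ldots,\ell_T(w_k)$ then determine the edge lengths $(\ell_1,\ldots,\ell_k)$ by inverting a full-rank linear system, hence determine $\ell_T$ on \emph{every} conjugacy class via the displayed formula, and Theorem~\ref{Chiswell} finally identifies the action $T$ uniquely.

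The main obstacle is the combinatorial spanning claim for short tight loops. Edges outside a chosen maximal tree are easy --- the simple loop each creates with the tree has a traversal vector with a single $1$ off the tree --- but edges \emph{inside} the tree require more care, using barbells or figure-eights straddling the tree-edge, and bridging edges (if present) need a dedicated argument. A secondary subtlety is that the reconstruction in Step~2 a priori only shows that $\ell_T|_\mathcal{F}$ determines $\ell_T$ on all of $F_n$ \emph{assuming the combinatorial type $G$ is known}; to rule out a competing free minimal action $T'$ whose quotient $G'$ differs from $G$ but which accidentally matches $T$ on $\mathcal{F}$, one may need to enlarge $\mathcal{F}$ to include enough test words whose tight lengths pin down the combinatorial marked type, after which the global injectivity supplied by Theorem~\ref{Chiswell} completes the argument.
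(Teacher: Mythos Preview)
Your linear-algebra step is correct but solves only the easy half of the problem: given the \emph{marked combinatorial type} of $G=T/F_n$, finitely many lengths recover the edge lengths and hence the full length function. The gap is the part you label a ``secondary subtlety.'' Suppose $T'$ is another free minimal action agreeing with $T$ on your finite set $\mathcal F$; its quotient $G'$ may have a completely different marked combinatorial type, so the traversal matrix for $T'$ is not the one you inverted, and your reconstruction of $\ell_T$ from $\ell_T|_{\mathcal F}$ tells you nothing about $\ell_{T'}$. You acknowledge this and propose to ``enlarge $\mathcal F$ to pin down the combinatorial marked type,'' but that is exactly the content of the lemma: there are infinitely many marked graphs (one for each marking), and showing that finitely many lengths isolate one is the whole point. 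In the simplicial (unit-edge-length) setting the paper actually uses, your linear-algebra step is vacuous (all $\ell_i=1$), so the entire burden falls on this unaddressed part.

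The paper's proof confronts this head-on with a different idea. For a rose it argues directly: any competing rose differs by an outer automorphism $\hat\phi$, and preserving the lengths of all classes of length $\leq 2$ forces $\hat\phi$ to permute the conjugacy classes of basis elements and then, via the two-letter words $w_iw_j$, to be trivial. For a general quotient one collapses a maximal tree to a rose, uses the rose case to pin down that rose with finitely many classes, and then observes there are only \emph{finitely many} blowups of a given rose---so Theorem~\ref{Chiswell} plus pairwise separation yields finitely many further classes distinguishing them. The key move you are missing is this reduction to a \emph{finite} set of competing actions, after which finitely many separating classes exist trivially.
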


\begin{proof} Assume first that the quotient by the action is a rose.  We claim that in this case the action is determined by the translation lengths of the $2n^2+2n$ classes which have length at most $2$.  These are represented by oriented loops of length at most $2$ in the quotient rose.  The petals of the rose correspond to some basis $w_1,\ldots,w_n$ for $F_n$.     

Suppose there was another free minimal action with quotient a rose in which these classes have the same lengths.  This action is the first action twisted by an (outer) automorphism $\hat\phi$.   
Any automorphism $\phi$ representing $\hat \phi$ must permute  the conjugacy classes of $w_1,w_1^{-1},\ldots,w_n,w_n^{-1}$ since these are all of the classes of length $1$.   
In fact, we may assume $\phi$ sends each $w_i$ to a conjugate of itself, since permuting and inverting the $w_i$ can be realized by  an isometry of the rose, which lifts to  an equivariant isometry of the tree.    

Take a representative $\phi$ for $\hat\phi$ with  $\phi(w_1)= w_1$, and suppose $\phi(w_2)=uw_2u^{-1}.$  Then $\phi(w_1w_2)=w_1uw_2u^{-1}$.  Since this is conjugate to an element of length $2$, $u$ must be a power of $w_1$.   Therefore, after composing $\phi$ with conjugation by $u^{-1}$  we may assume $\phi(w_1)=w_1$ and $\phi(w_2)=w_2$.  

Now consider $\phi(w_i)=vw_iv^{-1}$ for  $i>2$.  The argument above shows that $v$ must be a power of $w_1$ {\em and} a power of $w_2$, so in fact $v=1$ and $\phi$ is the identity. 

If the quotient by the action is a marked graph which is not a rose, choose a maximal tree and collapse it to get a rose.  We can distinguish this rose from any other rose by the lengths of finitely many conjugacy classes.  Our original marked graph is obtained from this rose by blowing up the vertex into a tree.  There are only finitely many ways to do this, which by Theorem~\ref{Chiswell} can be distinguished by finitely many lengths.  Thus the entire action is determined by the lengths of finitely many conjugacy classes.  
\end{proof}

\begin{remark}  The finite set of conjugacy classes found in Lemma~\ref{finite} depends on the action.  It can be shown that for 
 any fixed finite set of conjugacy classes there are two roses in which those conjugacy classes have the same translation length. (In fact, you can find an arbitrary (finite) number of roses in which those conjugacy classes have the same translation length, see \cite{SV}). 
\end{remark}

\begin{proof}[Proof of Proposition~\ref{ordering}]
We have to show that any subset $U$ of roses has a least element.  
Set $U=U_0$ and define a decreasing chain  
\[ U=U_0\supset U_1\supset U_2\ldots, \]
where $U_i$ is the set of elements in $U_{i-1}$ for which $\|\rho\|_{w_i}$ is minimal possible, say $\|\rho\|_{w_i}= \ell_i$.  Note that   each element of $U_i$ is $\leq$ each element of $U_{i-1}$ in the ordering.  

The function $f\colon F_n\to  \Z$ defined by $f(w_i)=\ell_i$ satisfies the axioms for a translation length function, so corresponds to an action on a tree.  This action is free since    $f(w_i)=\ell_i\neq 0$ for all $i$.  Therefore it corresponds to a marked graph $\gamma$ (with all edges of length 1). By Lemma~\ref{finite} 
 $\gamma$ is determined by the lengths of finitely many elements of $F_n$.  So for $N$ sufficiently large $U_N$ has only one element, a rose which must be equal to $\gamma$, which is smaller than any other element of $U$.  
\end{proof}

\section{Separating edges}

It is convenient to prove contractibility just for the subcomplex $L_n$ of $K_n$ spanned by graphs with no separating edges.  This is justified by the following observation.  

\begin{proposition}\label{nonsep} $K_n$ deformation retracts to the subcomplex $L_n$ spanned by graphs with no separating edges.
\end{proposition}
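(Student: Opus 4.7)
The strategy is to construct a poset retraction from the poset $\mathcal{P}$ of marked graphs, ordered so that $G'\leq G$ iff $G'$ is obtained from $G$ by a forest collapse, onto the subposet $\mathcal{Q}\subseteq\mathcal{P}$ of graphs with no separating edges (so that $|\mathcal{Q}|=L_n$). Granting this, the deformation retraction will follow from a standard fact about order complexes: if a poset map $r\colon\mathcal{P}\to\mathcal{P}$ restricts to the identity on $\mathcal{Q}$ and satisfies $r(x)\leq x$ for every $x$, then the map $\mathcal{P}\times\{0<1\}\to\mathcal{P}$ defined by $(G,0)\mapsto r(G)$ and $(G,1)\mapsto G$ is itself a poset map (the only nontrivial condition to check is $r(G)\leq G$), and its geometric realization provides the desired straight-line deformation from $|r|$ to the identity, fixing $L_n$.

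I define $r(G)=G\sslash S(G)$, where $S(G)$ is the set of separating edges of $G$. For this to be a valid forest-collapse, $S(G)$ must be a forest. It is: any simple cycle among edges of $S(G)$ would exhibit, for each edge $e_i$ on the cycle, an alternate path in $G$ between the endpoints of $e_i$ (via the rest of the cycle), contradicting the hypothesis that $e_i$ is separating. Thus $r(G)$ is a well-defined marked graph and $r(G)\leq G$.

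The technical core of the argument is the following \emph{separating-edge invariance lemma}: if $F$ is a forest in $G$ and $f$ is an edge of $G$ not in $F$, then $f$ is separating in $G$ if and only if its image in $G\sslash F$ is separating in $G\sslash F$. This is immediate from $(G\sslash F)\setminus\{f\}=(G\setminus\{f\})\sslash F$ (valid since $f\notin F$) together with the fact that collapsing a forest preserves the number of connected components. From the lemma I can verify the three properties of $r$. First, $r(G)\in L_n$: the separating edges of $G\sslash S(G)$ correspond to separating edges of $G$ lying outside $S(G)$, an empty set. Second, $r$ fixes $L_n$ pointwise by construction. Third, $r$ is order-preserving: if $G'=G\sslash F$, the lemma identifies $S(G')$ with $S(G)\setminus F$ as subsets of the edges of $G$, so $r(G')=G\sslash(F\cup S(G))$; since $F\cup S(G)\supseteq S(G)$, this exhibits $r(G')$ as a further collapse quotient of $r(G)$, and $F\cup S(G)$ is automatically a forest in $G$ because the quotient $r(G')$ has rank $n$ (a subgraph of $G$ whose collapse preserves rank $n$ must be a forest).

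The main obstacle is therefore nothing more than the separating-edge lemma together with careful tracking of the forest structure in the order-preservation step; everything else is bookkeeping, and the homotopy argument at the end is purely formal. If an explicit deformation is preferred over the appeal to $\mathcal{P}\times\{0<1\}$, one can describe it simplex-by-simplex: on a chain $(G_0<\cdots<G_k)$ in $K_n$ the $2(k+1)$ elements $r(G_0),\dots,r(G_k),G_0,\dots,G_k$ form a subposet isomorphic to $\{0,\dots,k\}\times\{0,1\}$, whose order complex is a prism $\Delta^k\times[0,1]$; the deformation interpolates linearly in the $[0,1]$ factor from $G_i$ to $r(G_i)$.
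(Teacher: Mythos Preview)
Your proof is correct and follows essentially the same approach as the paper: both apply Quillen's Poset Lemma to the poset map that collapses all separating edges of a marked graph. The paper's own proof is a single sentence invoking that lemma, whereas you have filled in the details it leaves implicit (that $S(G)$ is a forest, that the collapse map is order-preserving via your separating-edge invariance lemma, and the prism construction underlying the Poset Lemma itself).
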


The deformation retraction is easy to see:  one just uniformly shrinks all separating edges to zero.  Since $K_n$ is  the geometric realisation of a poset, one can give a formal proof  using  Quillen's Poset Lemma, which will also come in handy later.

\begin{lemma}\label{Quillen}[Quillen's Poset Lemma \cite{Q}] Let $P$ be a poset and $f\colon P\to P$   a poset map (i.e. $x\leq y$ implies $f(x)\leq f(y)$).  If in addition $f(x)\leq x$ for all $x$, then $|P|$ deformation retracts to $|f(P)|$, where vertical bars denote geometric realization. 
\end{lemma}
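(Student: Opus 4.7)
The plan is to reduce the statement to the following more general fact: whenever two poset maps $f, g \colon P \to Q$ satisfy $f(x) \le g(x)$ for all $x \in P$, the induced maps on geometric realizations $|f|, |g| \colon |P| \to |Q|$ are homotopic. Granting this, I would apply it with $Q = P$ and $g = \mathrm{id}_P$: the hypothesis $f(x) \le x$ then yields a homotopy between $|f|$ and $\mathrm{id}_{|P|}$. Since the image of $|f| \colon |P| \to |P|$ lies in $|f(P)|$, this homotopy is precisely a deformation retraction of $|P|$ onto $|f(P)|$ (at time $0$ it is the identity and at time $1$ it lands in $|f(P)|$).

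To establish the general fact, let $[1]$ denote the two-element chain $\{0 < 1\}$ and form the product poset $P \times [1]$. Its geometric realization is canonically homeomorphic to the prism $|P| \times [0,1]$, via the identification in which a maximal chain of the form $(x_0, 0) < \cdots < (x_i, 0) < (x_i, 1) < \cdots < (x_k, 1)$ in $P \times [1]$ corresponds to a top-dimensional simplex of the standard prism triangulation of the simplex on $x_0 < \cdots < x_k$. I would then define $H \colon P \times [1] \to Q$ by $H(x, 0) = f(x)$ and $H(x, 1) = g(x)$ and verify that $H$ is a poset map. The non-trivial case of the verification is a comparison $(x, 0) \le (y, 1)$, meaning $x \le y$ in $P$; here $H(x, 0) = f(x) \le g(x) \le g(y) = H(y, 1)$, using $f \le g$ pointwise and that $g$ is a poset map. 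Taking geometric realizations, $|H| \colon |P| \times [0,1] \to |Q|$ is the desired homotopy from $|f|$ to $|g|$.

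The main obstacle is essentially bookkeeping: checking all four cases in the verification that $H$ is a poset map, and being careful about the canonical identification of $|P \times [1]|$ with $|P| \times [0,1]$. Once these formalities are in place the argument is entirely formal. Note that the homotopy so constructed is not in general stationary on $|f(P)|$, so this gives a deformation retraction in the standard (non-strong) sense, with retraction $|f| \colon |P| \to |f(P)|$ and homotopy inverse given by the inclusion; this is exactly what is needed for the applications in this paper.
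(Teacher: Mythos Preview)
Your argument is correct and is exactly the ``straightforward application of the prism operator'' that the paper gestures at but leaves to the reader: the poset map $H\colon P\times[1]\to P$ is the prism construction, and its realization is the required homotopy. One small caveat: $|f|$ is a genuine retraction onto $|f(P)|$ only when $f$ is idempotent, so in full generality your argument yields a homotopy equivalence rather than a deformation retraction in the usual sense---but every map to which the lemma is applied in this paper is visibly idempotent, so the distinction is moot here.
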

Note that by using the opposite poset we can draw  the same conclusion if $f(x)\geq x$ for all $x$.  The proof of the Poset Lemma is  a straightforward application of the prism operator, and is left to the reader.

\begin{proof}[Proof of Proposition~\ref{nonsep}] The map $f\colon K_n\to K_n$ which contracts each separating edge is a poset map with image $L_n$.  
\end{proof}

We reiterate our plan of attack, as an excuse to introduce some notation.
All roses are in $L_n$, and we view $L_n$ as the union of the simplicial stars of its roses:  \[ L_n=\bigcup_{\hbox{\tiny roses }\rho} st(\rho). \]
We construct $L_n$ by starting with the star of the (unique) rose  of minimal norm and adding stars of roses in the order dictated by the norm, i.e. 
for each rose $\rho$, define  \[ L_{<\rho}=\bigcup_{\|\rho^\prime\|<\|\rho\|} st(\rho^\prime). \]
We will prove that if $st(\rho)\cap L_{<\rho}$ is non-empty, then it is contractible.  This will show that $L_n$ is a union of contractible components.  But we already know $L_n$ is connected, so it is contractible. 

\section{Reductive graphs and the Factorization lemma}

We call a marked graph {\em reductive} if it is in $st(\rho)\cap L_{<\rho}$; thus    $(G,g)$ is  reductive if and only if $G$ contains  maximal trees $\Phi$ and $F$ such that collapsing $\Phi$ gives $\rho$ and collapsing $F$ gives a different rose $\rho^\prime$ with $\|\rho^\prime\|<\|\rho\|$.    

For  each edge $e\in G$ of  a marked graph $(G,g)$, let  $|e|\in \Z^{\mathcal W}$ be the element whose coefficient $|e|_w$ is the number of times a tight representative of $g(w)$ crosses $e$, in either direction.  Since collapsing a forest sends tight paths to tight paths, we get \[ \|\rho^\prime\|=\|\rho\|+\sum_{\alpha\in\Phi}|\alpha|-\sum_{e\in F}|e|. \]

\begin{proposition}\label{switch}  If $\Phi=\{\alpha_1,\ldots,\alpha_k\}$ and $F=\{e_1,\ldots,e_k\}$ are maximal trees in a graph $G$, then there is a permutation $\sigma$ of $\{1,\ldots,n\}$   such that 
$e_{\sigma(i)}$ connects the two components of $\Phi-\alpha_i$ for each $i$.
\end{proposition}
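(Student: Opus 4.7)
The plan is to recognize this statement as a symmetric basis-exchange property for the graphic matroid of $G$ and to prove it via Hall's marriage theorem applied to an appropriate bipartite matching problem. (I also read the index set as $\{1,\ldots,k\}$ rather than $\{1,\ldots,n\}$, since $\Phi$ and $F$ have $k$ edges each.)

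First I would set up the bipartite graph $H$ with parts $\Phi$ and $F$, where $\alpha_i$ is joined to $e_j$ precisely when $e_j$ connects the two components of $\Phi-\alpha_i$. A useful reformulation: since $\Phi$ is a tree, for any edge $e_j\in F$ there is a unique path $P_j\subset\Phi$ between its endpoints, and removing $\alpha_i$ from $\Phi$ separates those endpoints if and only if $\alpha_i\in P_j$. So the edges of $H$ incident to $e_j$ are exactly the edges of $\Phi$ lying on the fundamental cycle of $e_j$ with respect to $\Phi$. Finding the required $\sigma$ is then finding a perfect matching in $H$.

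Next I would verify Hall's condition. Fix $S\subseteq\Phi$ and let $N(S)\subseteq F$ be its neighborhood in $H$. Because $\Phi$ is a tree with $k$ edges, $\Phi\setminus S$ breaks into exactly $|S|+1$ subtrees $C_0,\ldots,C_{|S|}$. By the reformulation above, an edge $e_j\in F\setminus N(S)$ has both endpoints in a single component $C_t$. Now contract each $C_t$ to a point in $G$; the contracted image of $F$ must still be connected, since $F$ is a spanning tree of $G$. Edges in $F\setminus N(S)$ become loops under this contraction and contribute nothing to connectivity, whereas edges in $N(S)$ become honest edges among the $|S|+1$ contracted vertices. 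Connecting $|S|+1$ vertices requires at least $|S|$ non-loop edges, so $|N(S)|\ge |S|$, which is Hall's condition.

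Applying Hall's theorem then produces a bijection $\sigma\colon\{1,\ldots,k\}\to\{1,\ldots,k\}$ with $e_{\sigma(i)}$ joining the two components of $\Phi-\alpha_i$, as required. The only step with any content is the Hall-condition check; the contraction trick above is what makes it painless, and it is the part I would be most careful to write clearly. Everything else is bookkeeping about fundamental cycles and cocycles in a tree.
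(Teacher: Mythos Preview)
Your proof is correct, and it takes a genuinely different route from the paper's. The paper observes that the edges of $\Phi$ and the edges of $F$ each give a basis for $H_1(\Phi\cup F)$, writes down the change-of-basis matrix $B$ whose $(i,j)$-entry is $\pm 1$ exactly when $e_i$ lies on the path in $F$ joining the endpoints of $\alpha_j$ (equivalently, when $e_i$ connects the two components of $\Phi-\alpha_j$), and then extracts the desired permutation from a nonzero term in the Leibniz expansion of $\det B$. Your argument instead verifies Hall's condition directly via the contraction of the components of $\Phi\setminus S$, which is entirely elementary and avoids any appeal to homology or determinants; the contraction trick is a clean way to count. The paper's approach is a bit slicker to state and carries the extra structural information that the bipartite adjacency matrix is actually invertible (not merely possessing a perfect matching), which is natural from the homological viewpoint; your approach is closer to the standard matroid-theoretic proof of symmetric basis exchange and may be more transparent to readers unfamiliar with the determinant trick. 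You are also right that the index set should be $\{1,\ldots,k\}$.
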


\begin{proof}   First of all we want $\sigma$ to be the identity on common edges of $\Phi$ and $F$.  We can then  contract these edges to reduce the problem to the case that   $\Phi$ and $F$ have no edges in common.  

It's easy to move from $\Phi$ to $F$ by replacing one edge at a time, but we want to do something a little more subtle than that.  Here's one proof you can do it:  

$\Phi\cup F$ is a graph of rank $k$, and the edges of $\Phi$ or of $F$ each give a basis for $H_1(\Phi\cup F)$.  The change of basis matrix $B$ has entry $b_{ij}=\pm 1$  if the unique path in $F$ connecting the endpoints of $\alpha_j$ crosses $e_i$; otherwise $b_{ij}=0$.  Since this matrix is non-singular, some term 
\[ sign(\sigma)\prod_{i=1}^n  b_{i,\sigma(i)} \] in the expression for the determinant of $B$ is non-zero. Then $e_{\sigma(i)}$ joins the two components of $\Phi-\alpha_i$.
\end{proof}

\begin{corollary}[Factorization lemma]  In $st(\rho)$, every reductive $(G,g)$ is adjacent to a 2-vertex reductive graph. 
\end{corollary}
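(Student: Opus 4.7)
The plan is to realize the sought 2-vertex reductive neighbour of $(G,g)$ as an explicit face of $(G,g)$ in $K_n$. Let $\Phi=\{\alpha_1,\ldots,\alpha_k\}$ and $F=\{e_1,\ldots,e_k\}$ be the maximal trees with $G\sslash\Phi=\rho$ and $G\sslash F=\rho'$, $\|\rho'\|<\|\rho\|$, and let $\sigma$ be the permutation supplied by Proposition~\ref{switch}. For each index $j$, set $G_j := G\sslash(\Phi-\alpha_j)$. Since $\Phi-\alpha_j$ is a forest with exactly two tree components, $G_j$ has exactly two vertices; a routine valence count (using that each vertex of $G$ has valence at least $3$) shows that $G_j$ is still a legitimate marked graph. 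Because $G_j\sslash\alpha_j=G\sslash\Phi=\rho$, we have $G_j\in st(\rho)$. Because $e_{\sigma(j)}$ joins the two components of $\Phi-\alpha_j$, it is a non-loop edge between the two vertices of $G_j$, and collapsing it produces a rose $\rho_j$ with $\|\rho_j\|=\|\rho\|+|\alpha_j|-|e_{\sigma(j)}|$ by the identity displayed above Proposition~\ref{switch}. Hence $G_j$ is a 2-vertex reductive graph adjacent to $(G,g)$ as soon as $|e_{\sigma(j)}|>|\alpha_j|$.

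The whole problem therefore reduces to producing one such index $j$. Reindexing the sum $\sum_i|e_i|$ by $\sigma$ and combining with $\|\rho'\|<\|\rho\|$ turns the displayed identity into $\sum_i\bigl(|e_{\sigma(i)}|-|\alpha_i|\bigr)>0$ in $\Z^{\mathcal{W}}$. The key observation I would verify here is that the lex-nonnegative cone in $\Z^{\mathcal{W}}$ is closed under addition: given $c_1,\ldots,c_k\geq 0$ in lex, take the smallest index $m$ for which some $c_i$ has $(c_i)_{w_m}\neq 0$; then every $(c_i)_{w_\ell}$ vanishes for $\ell<m$ (so the earlier coordinates of $\sum c_i$ vanish), while at $w_m$ the sum is a strictly positive integer. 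Contrapositively, if $|e_{\sigma(i)}|\leq|\alpha_i|$ in lex for every $i$, then $\sum_i\bigl(|\alpha_i|-|e_{\sigma(i)}|\bigr)\geq 0$, contradicting the strict inequality above. So some $j$ has $|e_{\sigma(j)}|>|\alpha_j|$, and the corresponding $G_j$ is the required 2-vertex reductive neighbour.

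A last sanity check I would include is that $G_j$ genuinely lies in $L_n$: an edge $e$ of $G_j$ whose removal disconnected $G_j$ would, on lifting to $G$, force $G-e$ to be disconnected as well, contradicting $(G,g)\in L_n$. I expect the only real subtlety in this argument to be the lex-order monotonicity used in the second paragraph; everything else is direct bookkeeping about forest collapses and the data already packaged in Proposition~\ref{switch}.
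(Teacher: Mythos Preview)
Your proof is correct and follows essentially the same route as the paper's: use the permutation $\sigma$ from Proposition~\ref{switch} to rewrite $\|\rho'\|-\|\rho\|$ as $\sum_i(|\alpha_i|-|e_{\sigma(i)}|)$, pick an $i$ with $|\alpha_i|-|e_{\sigma(i)}|<0$, and collapse $\Phi-\alpha_i$. The paper is terser---it takes for granted the ordered-group fact that a sum of lex-nonnegative elements of $\Z^{\mathcal W}$ is lex-nonnegative, and it omits the checks that $G_j$ has no low-valence vertices and no separating edges---but your spelling these out is harmless and the extra verifications are all correct.
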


\begin{proof}  With $\Phi$, $F$ and $\sigma$ as in Lemma~\ref{switch}, we have \[ \|\rho^\prime\|=\|\rho\|+\sum_{i}|\alpha_i|-\sum_{i }|e_{\sigma(i)}|=\|\rho\|+\sum_i(|\alpha_i| -|e_{\sigma(i)}|). \]
Since $\|\rho^\prime\|<\|\rho\|$, we must have $|\alpha_i| -|e_{\sigma(i)}|<0$ for some $i$.  Then the two-vertex graph obtained by collapsing all edges of $\Phi-\alpha_i$ is reductive (and is connected to $(G,g)$). 

\end{proof}

Thus we may view $st(\rho)\cap L_<\rho$ as a union of stars of 2-vertex graphs.  If we are lucky there is only one reductive 2-vertex graph  so $st(\rho)\cap L_<\rho$  is contractible.  We are seldom so lucky, however, and need to work harder.  In order to do this we will reinterpret reductive graphs and the norm using a neat combinatorial model originally introduced by Whitehead in the context of sphere complexes in doubled handlebodies.  This model (translated into the language of partitions and graphs instead of sphere complexes) is explained in the next section.  

\section{Ideal edges}

We now fix a rose $\rho=(r,R)$ and re-interpret graphs in $st(\rho)$ in terms of partitions of the set $H$ of half-edges of $R$.  We denote the natural involution on $H$ by $e\mapsto \overline e$.  This section explains the translation from graphs to partitions.

A marked graph $(G,g)$  is in $st(\rho)$ if and only if $G$ has a maximal tree $\Phi$ so that the composition of the collapsing map $c_\Phi$ with $g$ is homotopic to $r$.  In particular, the edges of $G - \Phi$ are mapped homeomorphically onto the edges of $R$, so if you snip each edge of $G-\Phi$ you obtain a tree whose leaves are labelled by the elements of $H$ (see Figure~\ref{snip}). Each edge $\alpha$ of $\Phi$ gives a partition of $H$ into two subsets, called the {\em sides} of $\alpha$, and different edges $\alpha$ and $\beta$ give {\em compatible} partitions, in the sense that $A\cap B=\emptyset$ for some choice of sides $A$ of $\alpha$ and $B$ of $\beta$.    Conversely, given any set of compatible partitions of $H$ we can reconstruct the tree $\Phi$ with leaves labeled by $H$, and recover $(G,g)$ by reconnecting the paired elements of $H$.

\begin{figure}\begin{center}
\begin{tikzpicture} [thick, scale=.45]%%%% Ideal edges and blowups%%%%%%  Blowup
\begin{scope}[decoration={markings,mark = at position 0.5 with {\arrow{stealth}}}] 
\draw [black!40!green, line width=2pt] (0,0) to (2,3) to (4,0);
\midarrow (0,0) to [out=120, in=180] (2,3);
\midarrow  (0,0) to [out=-30, in=210] (4,0);
\draw [->]  (2,3) to [out=150, in=180] (2,5);
\draw (2,5) to [out=0, in=30] (2,3);
\draw [->] (4,0) to  [out=60, in=90] (6,0);
\draw (6,0) to  [out=270, in=300] (4,0);
\node (e1) at (-.5,2) {$e_1$};
\node (e2) at (2,0) {$e_2$};
\node (e3) at (5,1) {$e_3$};
\node (e4) at (3.2,4) {$e_4$};
\end{scope}

\begin{scope}[xshift = 11cm]
\draw [black!40!green, line width=2pt] (0,0) to (2,3) to (4,0);
\draw (-.75,.75) to (.75,-.75);%e1bar to e2bar
\draw (3.25 ,-.8) to (4,0) to (5,.75); %e2 to e3
\draw (4,0) to (5,-.75); %to e3bar
\draw (.75,3) to (3.25,3); %e1 to e4bar
\draw (2,3) to (2,4.25); %to e4
\node (e1bar) at (-1.1,1.1) {$\overline e_1$};
\node (e1) at (.25,3) {$ {e_1}$};
\node (e1bar) at (1.1,-1.1) {$\overline e_2 $};
\node (e2) at (3,-1.15) {${e_2}$};
\node (e3) at (5.5,1) {$e_3$};
\node (e3bar) at (5.5,-1.15) {$\overline e_3$};
\node (e4) at (2,4.75) {$e_4$};
\node (e4bar) at (3.75,3 ) {$\overline e_4 $};
\end{scope}

\begin{scope}[xshift = 22cm, yshift=.5cm]
\node (e1bar) at (-1,1) {$\overline e_1$};
\node (e1) at (.5,3) {$ {e_1}$};
\node (e1bar) at (1,-1) {$\overline e_2 $};
\node (e2) at (3.2,-1.05) {${e_2}$};
\node (e3) at (5.3,.8) {$e_3$};
\node (e3bar) at (5.3,-1.05) {$\overline e_3$};
\node (e4) at (2,4.5) {$e_4$};
\node (e4bar) at (3.5,3 ) {$\overline e_4 $};
\draw[black!40!green, densely dotted, rotate=-45, line width = 0.5mm] (0, 0) ellipse (2.5cm and 1.5cm);
 \draw[black!40!green, densely dotted, rotate=45, line width = 0.5mm] (3, -3.5) ellipse (2.5cm and 2cm);
\end{scope}
\end{tikzpicture}
\caption{A graph in $st(\rho)$ is equivalent to a partition of the half-edges of $\rho$}\label{snip}
\end{center}
 \end{figure}
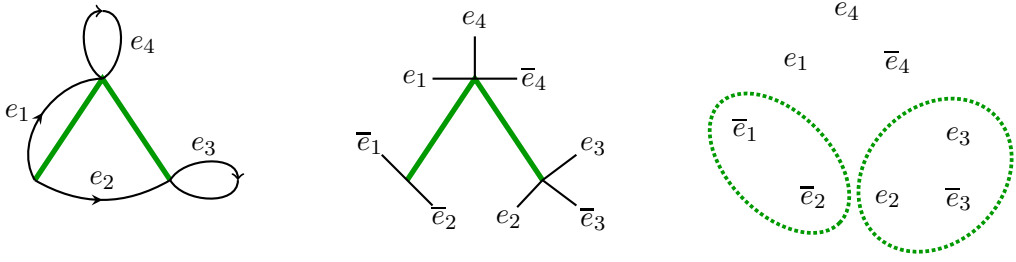

\begin{definition}  A partition of the set $H$ of half-edges of $R$ is an {\em ideal edge} if it separates some pair $\{e, \overline e\}$.   An ideal edge is {\em trivial} if one side is a singleton.  An {\em ideal tree} is a  set of non-trivial ideal edges which are pairwise compatible.   
\end{definition}

Note that a trivial ideal edge corresponds to a graph with a bivalent vertex, and a partition which is not an ideal edge corresponds to a graph with a separating edge.  Since none of our graphs have bivalent vertices or separating edges, the vertices of $st(\rho)$ correspond  to ideal trees.  Collapsing an edge of a tree corresponds to removing a partition from the associated ideal tree.  Thus the simplicial complex $st(\rho)$ is the geometric realization of the poset of ideal trees, ordered by inclusion.  

We say an ideal tree or edge is {\it reductive} if the corresponding graph is reductive. 
The Factorization Lemma, reinterpreted in this language, says that if $\Phi$ is a reductive ideal tree, then it contains a reductive ideal edge.  

If $\alpha$ is a reductive ideal edge, then there is some pair $\{e,\bar e\}$ in $H$ separated by $\alpha$ with $|\alpha|-|e| <0$.  If $A$ is the side of $\alpha$ containing $e$, we call $(A, e)$ a {\em reductive pair} for $\alpha$.   

\section{The star graph and the norm}\label{star}

We have reinterpreted graphs in $st(\rho)$ as partitions of the half-edges of $\rho$.  We now need to interpret the norm of a rose in this model.  In order to understand which partitions are in the reductive subcomplex of $st(\rho)$, we also need to interpret $|e|$ and $|\alpha|$ in this model.  

Fix a rose $\rho=(r,R)$ and let $H$ be the half-edges of $R$.  For any conjugacy class $w$ we associate a graph $\Gamma_w$ called the {\em star graph} of $w$.  The vertices of $\Gamma_w$ are the elements of $H$. To define the edges take a tight loop representing $r(w)$ in $R$, then snip the edges of $R$ to make a tree with 2n leaves.  This cuts $r(w)$ into segments joining the cuts; these are the edges of $\Gamma_w$ (Figure~\ref{star graph}). Since $\|\rho\|_w$ is the length of $r(w)$, the sum of the valences of $\Gamma_w$ is twice $\|\rho\|_w$.

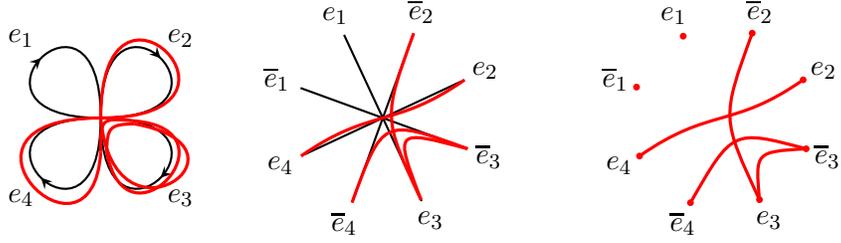
\begin{figure}
\begin{center}
\begin{tikzpicture}[thick,  scale=1.5]  %%%Star graph
 \begin{scope}[rotate=45, decoration={markings,mark = at position 0.5 with {\arrow{stealth}}}]
%%%%%  The rose  
\midarrow (0,0) .. controls (-1,1) and (1,1)   .. (0,0);
\midarrow (0,0) .. controls (1,1) and (1,-1)   .. (0,0);
\midarrow (0,0) .. controls (1,-1) and (-1,-1)   .. (0,0);
 \midarrow (0,0) .. controls (-1,-1) and (-1,1)   .. (0,0);
\node (e1) at (90:1) {$ {e_1}$};
\node (e2) at (0:1) {$e_2$};
\node (e3) at (270:1) {$e_3$};
\node (e4) at (180:1) {${e_4}$};
%%% The loop
%%% e2
\draw [red, line width=1.2pt] (0,0)  .. controls (1.1,1.1) and (1.1,-1.1)   .. (0,0); 
%%% e4 inverse
\draw [red, line width=1.2pt] (0,0)  .. controls (-1.1,1.1) and (-1.1,-1)   .. (225:.3);
%%% e3 squared
\draw [red, line width=1.2pt] (225:.3)  .. controls (0,0)   .. (-50:.3); %%e4 to e3 connector
 \draw [red, line width=1.2pt] (-50:.3)  .. controls (-50:1.25) and (235:1.25)  .. (235:.3);%%first e3 loop
  \draw [red, line width=1.2pt] (235:.3)  .. controls (260:.1) and (280:.1)   .. (305:.3) ;%e3 to e3 connector
   \draw [red, line width=1.2pt] (305:.3)  .. controls (-55:1.5) and (230:1.25)  .. (0,0);%%second e3 loop
\end{scope}
%%%%% Cut-open rose
\begin{scope}[xshift = 2.5cm]
\node (e2bar) at (70:1) {$ \overline e_2$};
\node (e1) at (115:1) {$e_1$};
 \node (e1bar) at (160:1) {$\overline e_1 $};
 \node (e4) at (205:1) {$e_4 $};
 \node (e4bar) at (250:1) {$\overline e_4$};
 \node (e3) at (295:1) {$ e_3$};
 \node (e3bar) at (-20:1) {$\overline e_3$};
 \node (e2) at (25:1 ) {$e_2$};
 \draw (e1) to (e3);
  \draw (e2) to (e4);
   \draw (e1bar) to (e3bar);
    \draw (e2bar) to (e4bar);
%%% The cut-open loop
\draw [red, line width=1.2pt] (295:.8) .. controls (.1,-.1)     .. (-20:.8);  
  \draw [red, line width=1.2pt] (250:.8) .. controls (0,0)     .. (-20:.8);
   \draw [red, line width=1.2pt] (205:.8) .. controls (-.1,.1) and (.1,-.1 ) .. (25:.8);
       \draw [red, line width=1.2pt] (295:.8) .. controls (0,0)   .. (70:.8);
\end{scope}
%%%%% The graph nodes
\begin{scope}[xshift = 5.5cm]
\node (e2bar) at (70:1) {$ \overline e_2$};
\node (e1) at (115:1) {$e_1$};
 \node (e1bar) at (160:1) {$\overline e_1 $};
 \node (e4) at (205:1) {$e_4 $};
 \node (e4bar) at (250:1) {$\overline e_4$};
 \node (e3) at (295:1) {$ e_3$};
 \node (e3bar) at (-20:1) {$\overline e_3$};
 \node (e2) at (25:1 ) {$e_2$};
 %%%%% The graph edges
 \fill [red, line width=1.2pt] (70:.8) circle (.03);
  \fill [red, line width=1.2pt] (115:.8) circle (.03);
   \fill [red, line width=1.2pt] (160:.8) circle (.03);
    \fill [red, line width=1.2pt] (205:.8) circle (.03);
            \fill [red, line width=1.2pt] (250:.8) circle (.03);
     \fill [red, line width=1.2pt] (295:.8) circle (.03);
      \fill [red, line width=1.2pt] (-20:.8) circle (.03);
       \fill [red, line width=1.2pt] (25:.8) circle (.03);       
 \draw [red, line width=1.2pt] (295:.8) .. controls (.3,-.3)     .. (-20:.8); 
  \draw [red, line width=1.2pt] (250:.8) .. controls (.1,-.1)     .. (-20:.8);
    \draw [red, line width=1.2pt] (205:.8) .. controls (-.1,.1) and (.1,-.1 ) .. (25:.8);
        \draw [red, line width=1.2pt] (295:.8) .. controls (0,0)   .. (70:.8);
\end{scope}
\end{tikzpicture}
\caption{Star graph of the loop $w=e_2e_4^{-1}e_3^2$}\label{star graph}
\end{center}
\end{figure}

Let $(G,g)$ be the two-vertex graph represented by a single ideal edge $\alpha$.  Recall that $|e|_w$ denotes the number of times a tight loop representing $g(w)$ passes over $e$.  If $e\neq \alpha$, then we could also measure $|e|_w$ in $\rho$ where   $|e|_w$ is equal to the valence of $e$ (or of $\overline e$) in the star graph $\Gamma_w$.  If $e=\alpha$, then $|\alpha|_w$ is equal to the number of edges of $\Gamma_w$ with one vertex on each side of $\alpha$.

\section{Contractibility of $st(\rho)\cap L_{<\rho}$}

Since we have reinterpreted $st(\rho)$ as the geometric realization of the poset of   ideal trees, we will make free and frequent use of Quillen's Poset Lemma (Lemma~\ref{Quillen}).

The complex we are interested in, $st(\rho)\cap L_{<\rho},$ is the poset  of {\em reductive} ideal trees; call it $P$.   As we remarked in section~\ref{star}, the Factorization Lemma says that each $\Phi\in P$ contains at least one reductive ideal edge.  Therefore the map $P\to P$ which throws out all of the {\it non}-reductive ideal edges is a well-defined map.  It satisfies the hypotheses of the Poset Lemma, so $|P|$ deformation retracts to its image, which is the realization of the subposet $Q$ of {\it strictly reductive} ideal trees, i.e. ideal trees all of whose edges are reductive.

Now choose a maximally reductive ideal edge $\mu$, i.e. there is a reductive pair $(M,m)$ for $\mu$ with $|m|-|\mu|$ maximal among all reductive pairs $(A,a)$.    
If every other reductive ideal edge is compatible with $\mu$, then $\mu$ can be added to every strictly reductive ideal tree $\Phi$, and the poset maps $\Phi\mapsto \Phi\cup\{\mu\}\mapsto \{\mu\}$ retract $|Q|$ to the single point $\mu$.

If there are edges $\alpha$ which cross $\mu$, we have to work harder.  Here is the Key Lemma (see Figure~\ref{keylemma}):

\begin{lemma}[Key Lemma] Let $\mu$ be a maximally reductive ideal edge, with maximal pair $(M,m)$,  let $\alpha$ be a reductive edge that crosses $\mu$, and let $A$ be the side of $\alpha$ that contains $m$.    Then  $A\cup M $ or $\overline A\cap M$ is one side of a reductive ideal edge $\gamma$.
\end{lemma}

Assuming the Key Lemma, we proceed as follows:

Choose $\alpha,$ with side $A$ containing $m$, such that  
\begin{itemize}
\item If $\beta$  is compatible with $\alpha$ and the side $B$ of $\beta$ containing $m$ also contains $A$,  then  $B\supset M$.  
\end{itemize}

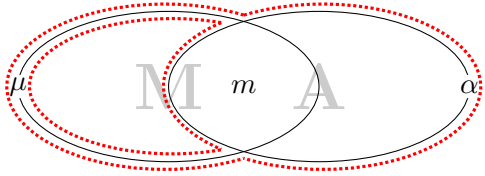
\begin{figure}
\begin{center}
\begin{tikzpicture} %%%% Pushing off a maximally reductive edge
 \colorlet{lightgray}{black!20}     
    \node (A) at (0,.0) {\Huge\bf \textcolor{lightgray}{M}};
       \node (M) at (2,.0) {\Huge\bf \textcolor{lightgray}{A}}; 
          \draw (0,0) ellipse (2cm and 1cm);
             \draw (2,0) ellipse (2cm and 1cm);
  \draw [white, fill=white] (-2,0) circle (.15);
  \draw [white, fill=white] (4,0) circle (.15);
    \node (x) at (-2,0) {$\mu$};
    \node (m) at (1,0) {$m$};
      \node (z) at (4,0) {$\alpha$};
         \draw [red, line width=1.2pt, densely dotted] (1.0,.95) arc(60:300:2.1cm and 1.1cm);
          \draw [red, line width=1.2pt, densely dotted] (1.0,.95) arc(120:-120:2.1cm and 1.1cm);
  \draw [red, line width=1.2pt, densely dotted] (.7,.85) arc(70:290:1.9cm and .9cm);
          \draw [red, line width=1.2pt, densely dotted] (.7,.85) arc(130:230:2.15cm and 1.1cm);
 \end{tikzpicture}
 \caption{Key Lemma:  One of the dotted ideal edges is reductive}\label{keylemma}
 \end{center}
 \end{figure}
 
By the Key Lemma, one of $M\cap \overline A$ or $M\cup A$ determines a reductive ideal edge $\gamma$.  We now observe that $\gamma$  is compatible with $\alpha$, $\mu$ and with every $\beta$ compatible with $\alpha$.  Therefore 
\[
\Phi \mapsto  \begin{cases}
  \Phi\cup \gamma &\mbox{if } \alpha\in\Phi \\
   \Phi &\mbox{if }  \alpha\not\in\Phi
   \end{cases}
\]
is a poset map. It satisfies the condition of the Poset Lemma, so retracts $|Q|$ to its image. 

In the image, everything that contains $\alpha$ also contains $\gamma$.  Then the map throwing $\alpha$ out of every $\Phi$ that contains it is also a poset map.  The final effect is to replace the edge $\alpha$ which crosses $\mu$ by the edge $\gamma$ which is compatible with $\mu$, i.e. the image is now all reductive ideal trees which do not contain $\alpha$.  

We repeat this procedure until we have eliminated every ideal edge which crosses $\mu$.  Then we can retract to $\mu$ as before.

\section{Proof of the Key Lemma}

In this section we prove  the Key Lemma needed in the proof of contractibility.  Recall we are working with a fixed rose $\rho=(r,R)$ and partitions of the set of half edges $H$ of $R$.  We first define the {\it dot product} $A.B$ of disjoint subsets $A$ and $B$ of $H$ 
as the element of $\Z^\mathcal W$ with coordinate $(A.B)_w$ equal to the number of edges in the star graph $\Gamma_w$ with one vertex in $A$ and one vertex in $B$.  

For $A\subseteq H$, set  $\overline A=H\setminus A$  and  $|A|=A.\overline A$. As noted in section~\ref{star},  for $e\in H$ $|e|_w$ is just the valence of $e$ in the  star graph 
$\Gamma(w)$, and if $A$ is either side of an ideal edge $\alpha$, then $|A|_w=|\alpha|_w$. 
 
 We use ``+" to denote disjoint union of sets, as well as addition in  $\Z^\mathcal W$, resulting in the following pleasing formulas.

 \begin{lemma} If $A, B$ and $C$ are disjoint subsets of $H$, then
 \begin{enumerate} \item $A.B=B.A$
 \item $A.(B+C)=A.B + A.C$
 \end{enumerate}
 \end{lemma}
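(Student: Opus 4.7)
The plan is to verify both identities coordinate by coordinate in $\Z^\mathcal{W}$. For any fixed conjugacy class $w$, the quantity $(A.B)_w$ is by definition a count of certain edges in the star graph $\Gamma_w$, so both claims reduce to elementary combinatorial statements about $\Gamma_w$ whose only input is the disjointness hypothesis on the subsets involved.

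For (1), I would note that an edge of $\Gamma_w$ is an unordered pair of vertices in $H$ (two half-edges linked by a segment of the tight loop representing $r(w)$). The condition ``one endpoint in $A$ and the other endpoint in $B$'' is therefore manifestly symmetric in $A$ and $B$, so $(A.B)_w = (B.A)_w$ in every coordinate, giving $A.B = B.A$.

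For (2), fix $w$ and consider any edge $\varepsilon$ of $\Gamma_w$ contributing to $(A.(B+C))_w$: it has one endpoint in $A$ and the other endpoint in the disjoint union $B+C$. Because $A$, $B$, $C$ are pairwise disjoint, the non-$A$ endpoint of $\varepsilon$ lies in exactly one of $B$ or $C$, so the set of edges counted by $A.(B+C)$ partitions cleanly into those with the non-$A$ endpoint in $B$ and those with it in $C$. This gives $(A.(B+C))_w = (A.B)_w + (A.C)_w$ for every $w$, and hence the stated equality in $\Z^\mathcal{W}$.

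There is no real obstacle here; the only point worth a second glance is the treatment of an edge of $\Gamma_w$ that is a loop at some half-edge $e$, but disjointness of the subsets rules out any double-counting in that case, since such a loop contributes $0$ to any dot product $A.B$ with $A \cap B = \emptyset$. The two formulas are in essence bookkeeping identities for counting edges of a graph according to a disjoint decomposition of its vertex set.
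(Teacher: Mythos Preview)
Your proposal is correct and is precisely the verification the paper has in mind; the paper's own proof is simply the word ``Straightforward,'' and your coordinate-by-coordinate edge count is exactly the straightforward argument being alluded to.
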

 \begin{proof} Straghtforward.
 \end{proof}

\begin{lemma}\label{count} Let $A$ and $B$ be subsets of $H$.  Then
$|A\cap B|+|A\cup B|\leq |A|+|B|.$
\end{lemma}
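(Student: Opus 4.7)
The plan is to reduce the statement to a direct bilinearity computation after splitting $H$ into the four atoms of the Boolean algebra generated by $A$ and $B$. Write
\[ P = A\cap B,\quad Q = A\setminus B,\quad R = B\setminus A,\quad S = H\setminus(A\cup B), \]
so that $P,Q,R,S$ are pairwise disjoint with $A=P+Q$, $B=P+R$, $A\cap B=P$, and $A\cup B=P+Q+R$. Since $\overline A = R+S$, $\overline B = Q+S$, $\overline{A\cap B}=Q+R+S$, and $\overline{A\cup B}=S$, the quantities $|A|,|B|,|A\cap B|,|A\cup B|$ all become dot products of unions of these four atoms.

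Using the bilinearity and symmetry of the dot product established in the preceding lemma (which applies since the atoms are pairwise disjoint), I would expand each of the four quantities as a sum of the six basic dot products $P\!\cdot\! Q$, $P\!\cdot\! R$, $P\!\cdot\! S$, $Q\!\cdot\! R$, $Q\!\cdot\! S$, $R\!\cdot\! S$. For example,
\[ |A| = (P+Q)\cdot(R+S) = P\!\cdot\! R + P\!\cdot\! S + Q\!\cdot\! R + Q\!\cdot\! S, \]
and similarly for the other three. Adding the expansions for $|A|+|B|$ and for $|A\cap B|+|A\cup B|$ and subtracting, every term cancels except a single contribution of $2\,Q\!\cdot\! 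R$, yielding the identity
\[ |A|+|B|-|A\cap B|-|A\cup B| = 2\,(A\setminus B)\!\cdot\!(B\setminus A). \]

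Since $X\!\cdot\! Y$ counts edges of the star graphs $\Gamma_w$ with endpoints in $X$ and $Y$ respectively, every coordinate of $(A\setminus B)\!\cdot\!(B\setminus A)$ in $\Z^{\mathcal W}$ is a nonnegative integer, so the right-hand side is $\geq 0$ in the ordering on $\Z^{\mathcal W}$. This gives the desired submodularity inequality. There is no real obstacle here; the statement is a purely combinatorial submodularity fact, and the only mild point to be careful about is that the inequality is taking place coordinatewise in $\Z^{\mathcal W}$, which is automatic because each coordinate of a dot product is a count of edges and hence nonnegative.
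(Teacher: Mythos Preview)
Your proof is correct and is essentially the same argument as the paper's: both split $H$ into the four atoms $A\cap B$, $A\setminus B$, $B\setminus A$, $\overline{A\cup B}$ (the paper calls these $Z,X,Y,W$), expand the four quantities by bilinearity, and obtain the identity $|A|+|B|=|A\cap B|+|A\cup B|+2\,(A\setminus B).(B\setminus A)$, from which the inequality follows since every coordinate of the dot product is a nonnegative edge count.
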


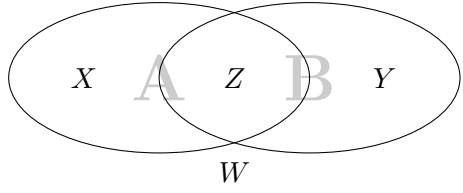
\begin{figure}
\begin{center}
\begin{tikzpicture} %%%% Venn diagram
 \colorlet{lightgray}{black!20}
    \node (A) at (0,.0) {\Huge\bf \textcolor{lightgray}{A}};
       \node (B) at (2,.0) {\Huge\bf \textcolor{lightgray}{B}}; 
          \draw (0,0) ellipse (2cm and 1cm);
             \draw (2,0) ellipse (2cm and 1cm);
    \node (x) at (-1,0) {$X$};
    \node (y) at (1,0) {$Z$};
      \node (z) at (3,0) {$Y$};
        \node (2) at (1,-1.25) {$W$};
 \end{tikzpicture}
 \caption{Diagram for Lemma~\ref{count}}\label{countdiag}
\end{center}
\end{figure}

\begin{proof} $A$ and $B$ together partition $H$ into disjoint subsets $Z=A\cap B$,  $W=\overline A\cap \overline B$, $X=A\cap\overline B$ and $Y=B\cap\overline A$ (see Figure~\ref{countdiag}).  We  compute
\[ 
|A|=(X+Z).(Y+W)= X.Y +X.W + Z.Y + Z.W 
\]
\[ 
|B|=(Y+Z).(X+W) = Y.X + Y.W + Z.X + Z.W 
\]
and
\[ 
|A\cap B|= Z.(X+Y+W) = Z.X+Z.Y+Z.W 
\]
\[  
|A\cup B| = (X+Z+Y).W= X.W+Z.W+Y.W 
\] 
So altogether we have $|A|+|B|=|A\cap B|+|A\cup B|+2(X.Y);$  in particular
\[
|A\cap B|+|A\cup B|\leq |A|+|B|.
\]
\end{proof}

\begin{lemma}[Key Lemma] Let $\mu$ be a maximally reductive ideal edge, with maximal pair $(M,m)$,  let $\alpha$ be a reductive edge that crosses $\mu$, and let $A$ be the side of $\alpha$ that contains $m$.    Then  $A\cup M $ or $\overline A\cap M$ is one side of a reductive ideal edge $\gamma$.
\end{lemma}

\begin{proof}[Proof of Key Lemma] Since $\alpha$ and $\mu$ cross, together they  partition $H$ into four disjoint subsets, which we will call {\it sectors}. Since $\alpha$ is reductive, there is $a\in A$ with $|a|-|\alpha|>0$.    The proof falls into cases depending on the locations of $a, \abar$ and  $\mbar$.

If each sector contains one of $a,\abar, m,\mbar$,  then the only possibility is  $a\in A\cap \overline M, \abar\in M\cap\overline A$ and $\mbar\in \overline A\cap \overline M$ (since we already have $m\in A\cap M$).  In this case Lemma~\ref{count} gives us
 
\[
 |A\cap M|+|A\cup M|+|\overline A\cap M|+|\overline A\cup M|
  \leq 2|A|+2|M|
\]
 or
\[
(|m| -|A\cap M|)+(|m| -|A\cup M|)+(|a| -|\overline A\cap M|)+( |a| -|\overline A\cup M|) 
\]
\[\geq 2(|a|-|A|) +2(|m| -|M|) 
\]
so
\[
(|m| -|A\cup M|)+(|a| -|\overline A\cap M|)\geq 
\]
\[
 2(|a|-|A|) +\big[(|m| -|M|)-(|m| -|A\cap M|)\big] +\big[(|m| -|M|)-(|a| -|\overline A\cup M|)\big] 
\]
Since $(A,a)$ is a reductive pair and $(M,m)$ is maximally reductive, each of the three terms on the bottom line is positive, so the sum on the next line up is positive,  which implies that at least one of $(A\cup M,m)$ or $(\overline A\cap M,a)$ is a reductive pair, as required.

We may now assume some sector contains none of $a,\abar,m,\mbar$.    Since $a$ and $\abar$ (resp. $m$ and $\mbar$) can't be in the same sector, some sector has    $a$ or $\abar$ and $m$ or $\mbar$.  Replacing $(M,m)$ by $(\overline M, \mbar)$ if necessary, we may assume $a,m\in A\cap M$.  The rest of the proof breaks into cases depending on the positions of $\abar$ and $\mbar$.

If $\mbar \in \overline A\cap \overline M$, then  the inequality $|A\cap M|+|A\cup M|\leq |A|+|B|$ from Lemma~\ref{count} gives 
\[
(|a|-|A\cap M|)+(|m|-|A\cup M|)\geq (|a|-|A|)+(|m|-|M|)
\]
so  
\[
|m|-|A\cup M|\geq \big[|a|-|A|\big]+\big[(|m|-|M|)-(|a|-|A\cap M|)\big].
\]

  Since $(M,m)$ is maximally reductive, both terms on the right hand side are positive, showing that $(A\cup M, m)$ is a reductive pair.

If $\mbar \in A$ and $\abar \in \overline A\cap \overline M$, the same proof with the roles of $a$ and $m$ switched shows $(A\cup M,a)$ is a reductive pair.  The only remaining case is $\mbar\in A, \abar\in M$; in this case we use Lemma~\ref{count} with the sets $\overline A$ and $M$ to get
\[
|\overline A\cap M| + |\overline A\cup M|\leq |\overline A|+|M|=|A| + |M|
\] 
so
\[
|m|-|\overline A\cap M| + |a|-|\overline A\cup M|\geq |a|-|\overline A|+|m|-|M|
\] 
\[
|m|-|\overline A\cap M| \geq (|a|-|\overline A|)+\big[(|m|-|M|) - (|a|-|\overline A\cup M|)\big],
\]   
and again both terms on the right are positive since $(M,m)$ is maximally reductive, showing $(\overline A\cap M,m)$ is a reductive pair.

\end{proof}

\bibliographystyle{siam}

\end{document}